\theoremstyle{plain}
\newtheorem{theorem}{Theorem}[section]
\theoremstyle{plain}
\newtheorem{lemma}[theorem]{Lemma}
\theoremstyle{plain}
\theoremstyle{definition}
\newtheorem{definition}[theorem]{Definition}
\theoremstyle{plain}
\newtheorem{proposition}[theorem]{Proposition}
\theoremstyle{remark}
\theoremstyle{definition}
\newtheorem{example}[theorem]{Example}
\theoremstyle{plain}
\theoremstyle{plain}
\theoremstyle{plain}
\title{Some counterexamples in abstract coarse geometry}
\author{Pawel Grzegrzolka}
\address{University of Tennessee, Knoxville, USA}
\email{pgrzegrz@vols.utk.edu}
\author{Jeremy Siegert}
\address{University of Tennessee, Knoxville, USA} 
\email{jsiegert@vols.utk.edu}
\date{\today} 
\keywords{coarse geometry, coarse topology, large-scale geometry, coarse proximity, proximity, asymptotic resemblance, coarse space}
\subjclass[2010]{54E05, 54E15, 51F99}
\begin{document}

\begin{abstract}
We construct an example of a coarse proximity space that is not induced by any coarse structure.  We then show how to "stitch" two coarse proximity spaces with homeomorphic boundaries into one coarse proximity space. Finally, we construct a coarse proximity space that is not induced by any asymptotic resemblance structure.
\end{abstract}

\maketitle

\section{Introduction}
The relationship between uniform structures and proximity structures is very well-understood (see for example chapter $3$ in \cite{proximityspaces}). In particular, it is known that every proximity structure is induced by some uniformity. In this paper, we show that the large-scale equivalents of uniform structures and proximity structures, namely coarse structures and coarse proximity structures, do not exhibit the same behavior. In particular, we construct an example of a coarse proximity space that is not induced (as in \cite{paper2}) by any coarse structure.  Then, we show how to "stitch" two coarse proximity spaces with homeomorphic boundaries into one coarse proximity space. Finally, we use the "stitching" to construct a coarse proximity space that is not induced (as in \cite{paper2}) by any asymptotic resemblance structure.

All necessary notation and background needed to understand the examples in this paper can be found in \cite{paper2}. An interested reader can find more information about coarse 
proximities in \cite{paper1} and \cite{paper3}, about coarse spaces in \cite{Roe}, and about asymptotic resemblance spaces in \cite{Honari}.

\section{Coarse proximity not induced by any coarse structure}
In this section, we construct a coarse proximity space that is not induced by any coarse structure. This construction has been suggested by Thomas Weighill. 

\begin{definition}
Let $X$ be a set with a bornology $\mathcal{B}.$ Then $\mathcal{A}\subseteq \mathcal{B}$ is called a \textbf{basis} for a bornology $\mathcal{B}$ if every $B \in \mathcal{B}$ is contained in some $A \in \mathcal{A}.$
\end{definition}

\begin{lemma}\label{needed lemma}
	Let $(X,\mathcal{E})$ be a coarse space which induces a coarse proximity structure $(X,\mathcal{B},{\bf b})$. If $\mathcal{B}$ has a countable basis, then for all $A,B\subseteq X$ we have that $A{\bf b}B$ if and only if there are unbounded countable sets $A_{0}\subseteq A$ and $B_{0}\subseteq B_{0}$ such that $A_{0}{\bf b}B_{0}$.
\end{lemma}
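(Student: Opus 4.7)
The plan is to treat the two directions of the biconditional separately. The backward direction is free: by the monotonicity axiom built into the definition of any coarse proximity, $A_0\subseteq A$, $B_0\subseteq B$, and $A_0{\bf b}B_0$ automatically imply $A{\bf b}B$, so neither the countable-basis hypothesis nor the fact that ${\bf b}$ is induced by $\mathcal{E}$ is used in this direction.

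For the forward direction, suppose $A{\bf b}B$. I would first unpack the definition of the induced coarse proximity: there is a controlled entourage $E\in\mathcal{E}$ such that $E[A]\cap B\notin\mathcal{B}$. Using the hypothesis, fix a countable basis of $\mathcal{B}$ and, by replacing each basis element with the union of the first $n$, arrange it as an increasing sequence $B_{1}\subseteq B_{2}\subseteq\cdots$ of bounded sets. Since every bounded set lies in some $B_{n}$ and $E[A]\cap B$ is not bounded, one can choose, for each $n\in\mathbb{N}$, a point $b_{n}\in(E[A]\cap B)\setminus B_{n}$, and then pick an associated $a_{n}\in A$ with $(a_{n},b_{n})\in E$. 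Set $A_{0}=\{a_{n}:n\in\mathbb{N}\}\subseteq A$ and $B_{0}=\{b_{n}:n\in\mathbb{N}\}\subseteq B$; both are countable by construction.

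What remains is to check that $A_{0}$ and $B_{0}$ are unbounded and that $A_{0}{\bf b}B_{0}$. For $B_{0}$ this is immediate from the escape condition: if $B_{0}\in\mathcal{B}$ then $B_{0}\subseteq B_{m}$ for some $m$, contradicting $b_{m}\notin B_{m}$. The unboundedness of $A_{0}$ is the subtlest point and I expect it to be the main obstacle; it relies on the standard coarse-space fact that the image $E[D]$ of a bounded set $D$ under any controlled entourage is again bounded, so if $A_{0}\in\mathcal{B}$ then $E[A_{0}]\in\mathcal{B}$, which is impossible since $B_{0}\subseteq E[A_{0}]$ and $B_{0}$ is unbounded. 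Finally, $A_{0}{\bf b}B_{0}$ follows from the same entourage $E$: the inclusion $B_{0}\subseteq E[A_{0}]$ gives $E[A_{0}]\cap B_{0}=B_{0}\notin\mathcal{B}$, which is precisely the defining condition for $A_{0}{\bf b}B_{0}$ in the induced coarse proximity.
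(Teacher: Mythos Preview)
Your proof is correct and follows essentially the same strategy as the paper: fix an entourage witnessing $A{\bf b}B$, take an increasing countable basis for $\mathcal{B}$, and extract a sequence of $E$-related pairs escaping every basis set. The only cosmetic difference is that the paper unpacks the induced proximity as ``$[(A\setminus D)\times(B\setminus D)]\cap E\neq\emptyset$ for every bounded $D$,'' which makes both $A_{0}$ and $B_{0}$ unbounded by construction, whereas your equivalent formulation ``$E[A]\cap B\notin\mathcal{B}$'' forces you to invoke the extra (standard) fact that $E[\text{bounded}]$ is bounded to conclude $A_{0}$ is unbounded.
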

\begin{proof}
	The statement is clearly true when either $A$ or $B$ is bounded, so let us assume that both $A$ and $B$ are unbounded. The backward direction is clear. To see the forward direction, assume $A{\bf b}B.$ By definition, this implies that there is an entourage $E\in\mathcal{E}$ such that for all $D\in\mathcal{B}$ we have 
	\[[(A\setminus D)\times(B\setminus D)]\cap E\neq\emptyset.\]
	Let $\{D_{n}\}_{n\in\mathbb{N}}$ be a countable basis for $\mathcal{B}$. Without loss of generality we may assume that $D_{i}\subseteq D_{i+1}$ for each $i$. Then for each $n\in\mathcal{B},$ let $(x_{n},y_{n})\in A\times B$ be chosen so that
	\[(x_{n},y_{n})\in[(A\setminus D_{n})\times(B\setminus D_{n})]\cap E.\]	
	Define $A_{0}:=\{x_{n}\}_{n\in\mathbb{N}}$ and $B_{0}:=\{y_{n}\}_{n\in\mathbb{N}}$. Then $A_{0}$ add $B_{0}$ are countable, unbounded, and we trivially have that $A_{0}{\bf b}B_{0}$.
\end{proof}

Now we will construct a coarse proximity space $(Y, \mathcal{B}, {\bf b})$ (whose bornology has a countable basis) and two unbounded subsets $A$ and $B$ such that $A{\bf b}B$ but for all unbounded countable sets $A_0 \subseteq A$ and $B_0 \subseteq B,$ we have that $A_0 \bar{\bf b}B_0.$ By the above lemma, this will show that the coarse proximity structure on $Y$ is not induced by any coarse structure on $Y.$

Let $I:=[0,1]$ be the closed interval with its metric topology and define $X:=I^{I}$ to be the corresponding product space with its product topology. By Tychonoff's theorem, this space is compact. Let $(Y,\mathcal{B},{\bf b})$ be the coarse proximity space defined by $Y:=X\times[0,1)$, $\mathcal{B}$ is all precompact sets in $Y$, and for any subsets $A,B\subseteq Y$ define $A{\bf b}B$ if and only if their closures in $X\times[0,1]$ intersect in $X\times\{1\}$. The fact that $(Y,\mathcal{B},{\bf b})$ is indeed a coarse proximity space follows from Proposition 6.1 in \cite{paper3}. Define
\[A:=\{(0)_{t\in I}\}\times[0,1).\]
To define $B,$ recall that for any nontrivial interval $J$ (open, closed, or half open) and any positive natural number $k$ we have that
\[|I|=|J|=|\mathbb{R}|=|\mathbb{R}^{k}|=|J^{k}|,\]
where $|\cdot|$ denotes the cardinality of a set. Thus, it is easy to see that for any such interval $J$ and any positive natural number $k$ we have that there is a bijection from $J$ to the set of subsets of $I$ of size $k.$ To construct $B,$ define $\phi_{1}$ to be any bijection between $[0,1/2]$ and the set of singletons in $[0,1]$. Define $\phi_{2}$ to be any bijection between $(1/2,3/4]$ and the subsets of $[0,1]$ of size $2$. In general, for any $n \geq 2,$ define $\phi_{n}$ to be a bijection from $(\frac{2^{n-1}-1}{2^{n-1}}, \frac{2^n-1}{2^n}]$ and the subsets of $[0,1]$ of size $n$. This gives an injective function $\phi$ from $[0,1)$ to finite subsets of $I$. We then define

\[B:=\left\{\{(x_{t})\}\times\{k\}\in Y\middle| k \in [0,1),  \begin{array}{ll}
x_{t}=0 &\text{if} \quad t\in\phi(k)\\
x_{t}=1 &\text{if} \quad t\notin\phi(k)
\end{array}\right\}.\]

To see that $A{\bf b}B,$ notice that the intersection of the closure of $A$ in $X \times [0,1]$ and $X\times\{1\}$ is precisely the point $\{(0_{t})\}\times\{1\}$. Denoting the intersection of the closure of $B$ in $X \times [0,1]$ and $X\times\{1\}$ by $B^{\prime},$ we claim that $\{(0_{t})\}\times\{1\}\in B^{\prime}$. Let $U$ be a basic open set in $X \times [0,1]$ containing $\{(0_{t})\}\times\{1\}$. Say $U=V\times W,$ where $W\subseteq[0,1]$ is an open set containing $1$. More specifically, we can assume that $W=(1-\epsilon,1]$. Let $t_{1},t_{2},\ldots,t_{n}$ be finitely many elements of $[0,1]$ such that $\pi_{t}(V)\neq[0,1]$ if and only if $t\in\{t_{1},\ldots,t_{n}\}$. Chose $m\in\mathbb{N}$ such that $m>n$ and $\frac{2^{m-1}-1}{2^{m-1}}>1-\epsilon.$ By the definition of $\phi,$ there is some $k\in(\frac{2^{m-1}-1}{2^{m-1}}, \frac{2^m-1}{2^m}]$ such that $\{t_{1},\ldots,t_{n}\}\subseteq \phi(k)$. Then the point $\{(x_{t})\}\times\{k\}\in B$ characterized by 
\[x_{t}=\left\{\begin{array}{ll}
0 & t\in\phi(k)\\
1 & t\notin\phi(k)\}
\end{array}\right\}\]
is an element of $B$ contained in $U.$ Therefore, $\{(0_{t})\}\times\{1\} \in B',$ and consequently $A{\bf b}B.$ 

Now let $A_{0}\subseteq A$ and $B_{0}\subseteq B$ be unbounded countable subsets. Note that the intersection of the closure of $A_0$ in $X\times[0,1]$ with $X\times\{1\}$ is precisely the singleton $\{(0_{t})\}\times\{1\}.$ Consider $B_{0}.$ We know that for each $\{(x_{t})\}\times\{k\} \in B_0,$ only finitely many $x_{t}$ are equal to $0$ and the rest of them are equal to $1.$ Let $C\subseteq[0,1]$ be all $t$ such that there is some $\{(x_{s})\}\times\{k\} \in B_0$ for which $x_{t}=0$. In other words, $C$ is the collection of all those "coordinates" of $X$ where at least one element of $B_0$ has value $0.$ Because $B_{0}$ is countable and each point of $B_0$ is such that only finitely many coordinates are equal to $0,$ we have that $C$ is countable. Let $t_{0}$ be an element of $[0,1]\setminus C.$ Define an open neighborhood $U$ of $\{(0_{t})\}\times\{1\}$ by
\[U:=\left(\prod_{t\in[0,1]}U_{t}\right)\times[0,1],\]
where $U_{t_0}:=[0,1/3)$ and $U_{t}=[0,1]$ for all $t\in[0,1]$ for which $t\neq t_0$. Then $U$ is a well-defined open neighborhood of $\{(0_{t})\}\times\{1\}$ in $X\times[0,1]$. However, note that for all $\{(x_{t})\}\times\{k\}\in B_{0},$ we have that $x_{t_{0}}=1$. Therefore, $B_{0}\cap U=\emptyset,$ i.e., $\{(0_{t})\}\times\{1\}$ is not in the closure of $B_0$ in $X \times [0,1].$ Consequently, we have that $A_{0}\bar{\bf b}B_{0}.$

It is left to show that the bornology on $Y$ has a countable basis. Let $\mathcal{A}:=\{X\times[0,r] \subseteq X \times [0,1) \mid r \text{ rational}\}.$ Clearly $\mathcal{A}$ is countable and each set $X\times[0,r_{n}]$ is an element of $\mathcal{B}.$ Also, each element of $\mathcal{B}$ is a subset of $X\times[0,r]$ for some $r.$ Thus, $\mathcal{A}$ is a countable basis for $\mathcal{B}.$

\section{Boundary stitching of coarse proximity spaces}

Now we will construct a coarse proximity space by "stitching" two coarse proximity spaces at their homeomorphic boundaries. Let $(X,\mathcal{B}_{X},{\bf b}_{X})$ and $(Y,\mathcal{B}_{Y},{\bf b}_{Y})$ be coarse proximity spaces for which the boundaries $\mathcal{U}X$ and $\mathcal{U}Y$ are homeomorphic. Let $\mathcal{B}_{X\sqcup Y}$ be the collection of all subsets of the disjoint union $X\sqcup Y$ that are subsets of some finite union of sets in $\mathcal{B}_{X}$ and $\mathcal{B}_{Y}$. Then $\mathcal{B}_{X\sqcup Y}$ is a bornology on $X\sqcup Y$. Let $g:\mathcal{U}Y\rightarrow\mathcal{U}X$ be a homeomorphism. If $A\subseteq X\sqcup Y$ is unbounded in $(X\sqcup Y,\mathcal{B}_{X\sqcup Y})$ then one of $A\cap X$ or $A\cap Y$ is unbounded. We will denote the trace of $A \cap X$ on $\mathcal{U}X$ by $A_{X}^{\prime}$ and the trace of $A \cap Y$ on $\mathcal{U}Y$ by $A_{Y}^{\prime}$. Then for unbounded $A,B\subseteq X\sqcup Y,$ we define
\[A{\bf b}_{g}B\iff [A_{X}^{\prime}\cup g(A_{Y}^{\prime})]\cap[B_{X}^{\prime}\cup g(B_{Y}^{\prime})]\neq\emptyset.\]
\begin{proposition}
With the notation above, the following are true:
\begin{enumerate}
\item $(X\sqcup Y,\mathcal{B}_{X\sqcup Y},{\bf b}_{g})$ is a coarse proximity space,
\item $\mathcal{U}(X\sqcup Y)$ is homeomorphic to $\mathcal{U}X.$
\end{enumerate}
\end{proposition}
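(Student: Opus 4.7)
The plan is to organize the proof around a single map
\[
\tau:\mathcal{P}(X\sqcup Y)\to\mathcal{P}(\mathcal{U}X),\qquad \tau(A):=A_{X}'\cup g(A_{Y}'),
\]
which packages both boundaries into one trace map into the compact Hausdorff space $\mathcal{U}X$. First I would record four baseline properties of $\tau$, all inherited from the corresponding properties of the traces $(\cdot)_{X}'$ and $(\cdot)_{Y}'$ together with the fact that $g$ is a homeomorphism: (i) $\tau(A)$ is closed in $\mathcal{U}X$; (ii) $\tau(A)=\emptyset$ exactly when $A\in\mathcal{B}_{X\sqcup Y}$, using that $g$ is a bijection and the analogous characterization in each factor; (iii) $\tau(A\cup B)=\tau(A)\cup\tau(B)$; and (iv) $A{\bf b}_{g}B$ iff $\tau(A)\cap\tau(B)\neq\emptyset$, which is the definition.

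For part (1), four of the five coarse proximity axioms of \cite{paper2} are direct consequences of (i)--(iv). Symmetry is immediate from (iv); the axiom excluding bounded sets from the relation comes from (ii); the axiom saying $A\cap B\notin\mathcal{B}_{X\sqcup Y}\Rightarrow A{\bf b}_{g}B$ follows from (ii) applied to $A\cap B$ together with the containment $\tau(A\cap B)\subseteq\tau(A)\cap\tau(B)$; and the union axiom follows from (iii) with distributivity of $\cap$ over $\cup$. The main obstacle is the strong axiom: given $A\bar{\bf b}_{g}B$, produce $E\subseteq X\sqcup Y$ with $A\bar{\bf b}_{g}E$ and $(X\sqcup Y)\setminus E\,\bar{\bf b}_{g}B$. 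My approach would be to use normality of $\mathcal{U}X$ (compact Hausdorff) to separate the disjoint closed sets $\tau(A)$ and $\tau(B)$ by disjoint open neighborhoods $U\supseteq\tau(A)$ and $V\supseteq\tau(B)$, and then apply the strong axioms of ${\bf b}_{X}$ and ${\bf b}_{Y}$ to produce $E_{X}\subseteq X$ and $E_{Y}\subseteq Y$ whose traces and complementary traces land in the correct open sets; the candidate separator is then $E:=E_{X}\sqcup E_{Y}$. The true technical difficulty lies in the cross terms $(E_{X})_{X}'\cap g(A_{Y}')$ and $g((E_{Y})_{Y}')\cap A_{X}'$, which are not controlled by the individual strong axioms; handling them requires choosing $E_{X}$ and $E_{Y}$ whose pushed-forward traces jointly avoid the thickened neighborhood of $\tau(A)$. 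A cleaner alternative that I would seriously consider is to glue abstract compactifications of $X$ and $Y$ along $g:\mathcal{U}Y\to\mathcal{U}X$ into one compactification of $X\sqcup Y$ with boundary $\mathcal{U}X$, and then invoke Proposition 6.1 of \cite{paper3} to obtain the coarse proximity (automatically satisfying all axioms) and verify it agrees with ${\bf b}_{g}$, thereby sidestepping the cross-term bookkeeping entirely.

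For part (2), property (iv) together with (i)--(iii) shows that $\tau$ behaves exactly as the trace map for ${\bf b}_{g}$ valued in the candidate boundary $\mathcal{U}X$, and the restriction of $\tau$ to subsets of $X$ (where it reduces to the original trace on $\mathcal{U}X$) already separates points of $\mathcal{U}X$. I would construct the homeomorphism $\mathcal{U}(X\sqcup Y)\to\mathcal{U}X$ as the map sending each cluster to the unique point of $\mathcal{U}X$ common to the closed traces $\tau(A)$ as $A$ ranges over the cluster: existence follows from compactness of $\mathcal{U}X$ via the finite intersection property, and uniqueness from Hausdorffness. Continuity and bijectivity are routine, and any continuous bijection between compact Hausdorff spaces is a homeomorphism.
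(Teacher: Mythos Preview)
Your ``cleaner alternative'' for the strong axiom is essentially the route the paper takes: it forms the glued compactification $\mathfrak{X}\sqcup_{g}\mathfrak{Y}$ (compact Hausdorff, hence normal), applies Urysohn's lemma there to the disjoint closed images $\tau(A)$ and $\tau(B)$, and pulls back a superlevel set of the Urysohn function to $X\sqcup Y$ to obtain the separator $E$. The paper does not invoke Proposition~6.1 of \cite{paper3} wholesale but rather extracts $E$ by hand from the Urysohn function; still, the key move---working in the glued object so that the cross terms are handled automatically---is the same one you identified. Your first approach, applying the strong axioms of ${\bf b}_{X}$ and ${\bf b}_{Y}$ separately to $A\cap X,B\cap X$ and $A\cap Y,B\cap Y$, does not suffice as stated: to kill the cross terms you must first \emph{realize} the full sets $\tau(A),\tau(B)\subseteq\mathcal{U}X$ (and $g^{-1}\tau(A),g^{-1}\tau(B)\subseteq\mathcal{U}Y$) as traces of subsets of $X$ (resp.\ $Y$) and apply the strong axiom to those, which already amounts to a Urysohn argument in each factor and is strictly more bookkeeping than the glued-space route.

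For part~(2) your approach and the paper's run in opposite directions. The paper observes that the inclusion $h\colon X\hookrightarrow X\sqcup Y$ is a coarse proximity map, invokes functoriality (Corollary~4.12 of \cite{paper3}) to obtain a continuous $\bar{h}\colon\mathcal{U}X\to\mathcal{U}(X\sqcup Y)$, and then spends its effort on injectivity and surjectivity of $\bar{h}$ via explicit cluster manipulations---in particular, for surjectivity it shows that $\sigma':=\{A\subseteq X:A\in\sigma\}$ is a cluster in $\mathfrak{X}$ with $\bar{h}(\sigma')=\sigma$, using another Urysohn argument inside $\mathfrak{X}$. Your map $\sigma\mapsto\bigcap_{A\in\sigma}\tau(A)$ is the would-be inverse and is conceptually cleaner, but be aware that the finite-intersection-property step is not purely formal: you need that any finitely many members $A_{1},\dots,A_{n}$ of a boundary cluster satisfy $\tau(A_{1})\cap\cdots\cap\tau(A_{n})\neq\emptyset$, which is stronger than the pairwise condition $A_{i}{\bf b}_{g}A_{j}$ and ultimately rests on the same absorption-type cluster facts the paper uses. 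Both approaches finish with the compact--Hausdorff bijection argument you cite.
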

\begin{proof}
To see (1), let us show that the the strong axiom holds (since the other axioms are evident). Let $A\overline{\bf b}_gB.$ This means that
\[[A_{X}^{\prime}\cup g(A_{Y}^{\prime})]\cap[B_{X}^{\prime}\cup g(B_{Y}^{\prime})]=\emptyset.\]
To find the desired set $E,$ consider $\mathfrak{X}\sqcup\mathfrak{Y}$ with the disjoint union topology. Create the set $\mathfrak{X}\sqcup_g\mathfrak{Y}$ by glueing $\mathfrak{X}$ and $\mathfrak{Y}$ along $\mathcal{U}X$ and $\mathcal{U}Y$ using $g.$ Equip that glued space with the usual quotient topology coming from the projection map $\pi: \mathfrak{X}\sqcup\mathfrak{Y} \to \mathfrak{X}\sqcup_g\mathfrak{Y}.$ It is easy to see that $\mathfrak{X}\sqcup_g\mathfrak{Y}$ is compact and Hausdorff, and consequently normal. Notice that $A_{X}^{\prime}\cup g(A_{Y}^{\prime})$ and $B_{X}^{\prime}\cup g(B_{Y}^{\prime})$ are closed and disjoint subsets of $\mathfrak{X}\sqcup_g\mathfrak{Y}.$ By Urysohn's Lemma, there exists a continuous map $f:\mathfrak{X}\sqcup_g\mathfrak{Y} \to [0,1]$ such that $f(x)=0$ for all $x \in (A_{X}^{\prime}\cup g(A_{Y}^{\prime}))$ and $f(x)=1$ for all $x \in (B_{X}^{\prime}\cup g(B_{Y}^{\prime})).$ Let 
\[E:= \pi^{-1}(f^{-1}((\frac{2}{3}, 1])) \cap (X \sqcup Y).\]
Notice that $E$ is an open set in $(X \sqcup Y).$ Also, notice that $E'_X$ (i.e., the trace of $E \cap X$ in $\mathcal{U}X$) does not intersect $A_X' \cup g(A'_Y)$ (because otherwise $f$ has to take the value of $0$ and some other value in $[\frac{2}{3},1]$ at a point of intersection). Similarly, $g(E'_Y)$ does not intersect $A_X' \cup g(A'_Y).$ In other words,
\[[A_{X}^{\prime}\cup g(A_{Y}^{\prime})]\cap[E_{X}^{\prime}\cup g(E_{Y}^{\prime})]=\emptyset,\]
i.e.,  $A\overline{\bf b}_g E.$ Similarly one can easily show that $B\overline{\bf b}_g ((X \sqcup Y) \setminus E).$ This finishes the proof of the strong axiom.

To show (2), let $\delta_g$ be the discrete extension of ${\bf b}_g.$ Notice that the inclusion map $h: X \to (X \sqcup Y)$ is a coarse proximity map. Consequently, by Corollary 4.12 in \cite{paper3}, the unique extension of $h$ between the Smirnov compactifications of $X$ and $(X \sqcup Y)$ is continuous and maps $\mathcal{U}X$ to $\mathcal{U}(X \sqcup Y).$ Denote that map from $\mathcal{U}X$ to $\mathcal{U}(X \sqcup Y)$ by $\bar{h}.$ To show that this map is injective, let $\sigma_1, \sigma_2 \in \mathcal{U}X$ be clusters with identical image $\sigma_3.$ Recall that by Proposition 3.5 in \cite{paper3}, $ \sigma_3$ is given by
\[\sigma_3=\{A \subseteq (A \sqcup B) \mid \forall \, B \in \sigma_1, A \delta_g B\}=\{A \subseteq (A \sqcup B) \mid \forall \, B \in \sigma_2, A \delta_g B\}.\]
Notice that $\sigma_1 \subseteq \sigma_3.$ For if $A \in \sigma_1,$ then $A{\bf b}_X B$ for all $B \in \sigma_1,$ which is equivalent to $A_X' \cap B_X' \neq \emptyset$ for all $B \in \sigma_1.$ This in turn implies that $A{\bf b}_g B$ for all $B \in \sigma_1.$ Thus, $A \in \sigma_3.$ Similarly one can show that $\sigma_2 \subseteq \sigma_3.$ To see that $\sigma_1 \subseteq \sigma_2,$ let $A \in \sigma_1.$ Since $\sigma_1 \subseteq \sigma_3,$ $A\delta_g B$ for all $B \in \sigma_2.$ Since both $A$ and $B$ are subsets of $X,$ this means that $A$ and $B$ intersect or $A_X' \cap B_X' \neq \emptyset$ (since $g(A_Y')=g(B_Y)'= \emptyset$). In particular, this implies that $A \cap B \neq \emptyset$ or $A{\bf b}_X B$ for all $B \in \sigma_2.$ Thus, $A \in \sigma_2.$ Thus, $\sigma_1 \subseteq \sigma_2.$ The opposite inclusion follows similarly. Thus, $\bar{h}$ is injective on $\mathcal{U}X.$ Finally, to see that $\bar{h}$ is surjective, let $\sigma$ be an arbitrary element of $\mathcal{U}(X \sqcup Y).$ Consider
\[\sigma':= \{A \subseteq X \mid A\in \sigma\}.\]
To see that $\sigma$ is a cluster in $\mathcal{U}X,$ let $\delta_X$ denote the discrete extension of ${\bf b}_X$ and let $A,B \in \sigma'.$ Then $A,B \in \sigma$ and thus $A \delta_b B.$ Since both $A$ and $B$ are strictly subsets of $X$ (and not $Y$), this implies that $A_X' \cap B_X' \neq \emptyset,$ which in turn implies that $A{\bf b}_XB.$ Consequently, $A\delta_X B.$ To see the second axiom of a cluster, let $A \delta_X C$ for all $C \in \sigma'.$ Consequently, $A \cap C \neq \emptyset$ or $A {\bf b}_X C$ for all $C \in \sigma'.$ W claim that this implies that $A \cap C \neq \emptyset$ or $A {\bf b}_g C$ for all $C \in \sigma.$ If that is not the case, then there exists $C \in \sigma$ such that $A \cap C = \emptyset$ and $A \bar{\bf b}_g C.$ Since $C \in X \sqcup Y,$ $C= D \cup E$ where $D \subseteq X, E \subseteq Y,$ and $D \cap E = \emptyset.$ Since $C \in \sigma,$ either $D \in \sigma$ or $E \in \sigma.$ If $D \in \sigma,$ then $D \in \sigma'$ and we have a contradiction, since $D \cap A = \emptyset$ and $D \bar{\bf b}_X A,$ i.e., $D \bar{\delta}_X A.$ Thus, it has to be that $E \in \sigma.$ Since $cl_{\mathfrak{X}}(A)$ and $g(E_Y')$ are closed and disjoint in $\mathfrak{X},$ by the application of Urysohn's Lemma in $\mathfrak{X}$ on $cl_{\mathfrak{X}}(A)$  and $g(E_Y')$, we get an open set $F$ in $X$ that is disjoint from $A$ and whose trace in $\mathcal{U}X$ contains $g(E_Y')$ while at the same time is disjoint from the trace of $A.$ Notice that $F \in \sigma.$ It is because if $H$ is an arbitrary element of $\sigma,$ then $H{\bf b}_g E$ (since they both belong to $\sigma$). Consequently,
\[(H_X' \cup g(H_Y'))\cap g(E_Y') \neq \emptyset,\]
which implies that 
\[(H_X' \cup g(H_Y'))\cap F_X' \neq \emptyset,\]
which implies that $F\delta_gH.$ Since $H$ was an arbitrary element of $\sigma,$ this shows that $F \in \sigma.$ But this is a contradiction, since $F \in \sigma',$ but $F\bar{\delta}_XA.$ Thus, it has to be that $A \cap C \neq \emptyset$ or $A {\bf b}_g C$ for all $C \in \sigma.$ This shows that $A \in \sigma$ and consequently $A \in \sigma'.$ Finally, to prove the last axiom of a cluster, let $A \cup B \in \sigma'.$ Then $A \cup B \in \sigma.$ Consequently, $A \in \sigma$ or $B \in \sigma,$ which shows that either $A \in \sigma'$ or $B \in \sigma'.$ This finishes the proof that $\sigma'$ is a cluster in $\mathcal{U}X$ (because it is clear that $\sigma'$ can only contain unbounded sets). Finally, let us show that the image of $\sigma'$ under $g$ is $\sigma.$ Recall that 
\[g(\sigma')=\{A \subseteq (A \sqcup B) \mid \forall \, B \in \sigma', A \delta_g B\}.\]
If $A \in \sigma,$ then clearly $A \delta_g B$ for all $B \in \sigma'$ (because any $B \in \sigma'$ is also an element of $\sigma$). Consequently, $A \in g(\sigma')$. Therefore, $\sigma \subseteq g(\sigma'),$ which implies that $\sigma = g(\sigma').$ This finishes the proof that $\bar{g}$ is a continuous bijective function between compact and Hausdorff spaces $\mathcal{U}X$ and $\mathcal{U} (X \sqcup Y),$ i.e., $\bar{g}$ is a homeomorphism from $\mathcal{U}X$ to $\mathcal{U} (X \sqcup Y).$
\end{proof}
\section{Coarse proximity not induced by any asymptotic resemblance}
In this section, we construct a coarse proximity space that is not induced by
any asymptotic resemblance structure.

\begin{lemma}\label{stronk implies weak}
	Let $(X,\lambda)$ be an asymptotic resemblance space which induces a coarse proximity structure $(X,\mathcal{B},{\bf b})$ with corresponding weak asymptotic resemblance relation $\phi$. If $A,B\subseteq X$ are unbounded sets such that $A\lambda B$, then $A\phi B$.
\end{lemma}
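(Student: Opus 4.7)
The plan is to unfold the definitions and reduce the statement to the decomposition axiom of asymptotic resemblance spaces.

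First, I would recall how $\lambda$ induces ${\bf b}$: for unbounded sets, $A{\bf b}B$ holds precisely when there exist unbounded $A_{0}\subseteq A$ and $B_{0}\subseteq B$ with $A_{0}\lambda B_{0}$. Hence the hypothesis $A\lambda B$ immediately gives $A{\bf b}B$ by taking $A_{0}=A$ and $B_{0}=B$. More generally, whenever I can produce unbounded $\lambda$-related subsets of given sets, I automatically get a ${\bf b}$-relation.

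Second, I would unpack the definition of the weak asymptotic resemblance $\phi$ associated to ${\bf b}$: roughly, $A\phi B$ asserts that for every two-piece decomposition of one of $A$ or $B$, there is a matching decomposition of the other such that the corresponding unbounded pieces are ${\bf b}$-related. Starting from an arbitrary decomposition $A=A_{1}\sqcup A_{2}$, I would apply the decomposition axiom of the asymptotic resemblance space $(X,\lambda)$: from $A\lambda B$ and $A=A_{1}\cup A_{2}$ one obtains a corresponding decomposition $B=B_{1}\cup B_{2}$ with $A_{i}\lambda B_{i}$ for each $i$. By the observation in the first paragraph, any such pair of $\lambda$-related unbounded subsets is ${\bf b}$-related, so $A_{i}{\bf b}B_{i}$ whenever both sides are unbounded, which is exactly the data required to certify $A\phi B$. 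The symmetric case of decomposing $B$ is handled the same way using symmetry of $\lambda$.

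The main obstacle is the careful bookkeeping of bounded versus unbounded components in the lifted decomposition. The $\lambda$-decomposition axiom typically allows one of the matched pieces to be bounded, while the definition of $\phi$ imposes ${\bf b}$-conditions only on the unbounded pieces; one must verify that a bounded $A_{i}$ is necessarily matched with a bounded $B_{i}$ (which follows from the standard fact that, under $\lambda$, a bounded set can only be asymptotically related to bounded sets) so that no unbounded crossing slips through unchecked. Once this case analysis is in place, the verification of $A\phi B$ is a direct application of definitions.
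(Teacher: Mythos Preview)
Your core idea---reduce to the decomposition axiom of $\lambda$---is exactly what the paper does, so the strategy is correct. The one issue is your description of $\phi$. In the references the weak asymptotic resemblance is \emph{not} defined via matched decompositions; rather, for unbounded $A,B$ one sets $A\phi B$ to mean that every unbounded $A_{0}\subseteq A$ satisfies $A_{0}{\bf b}B$ and every unbounded $B_{0}\subseteq B$ satisfies $A{\bf b}B_{0}$. Your decomposition-style formulation happens to be equivalent, but you would owe a justification for that equivalence, which is extra work you do not need.

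With the correct definition the argument collapses to precisely the paper's proof: take an unbounded $B_{0}\subseteq B$, write $B=B_{0}\cup(B\setminus B_{0})$, and apply the $\lambda$-decomposition axiom to $A\lambda B$ to obtain $A=A_{1}\cup A_{2}$ with $A_{1}\lambda B_{0}$. Since $B_{0}$ is unbounded, so is $A_{1}$ (bounded sets are only $\lambda$-related to bounded sets), and then $A_{1}\lambda B_{0}$ witnesses $A{\bf b}B_{0}$ by the definition of the induced coarse proximity. The symmetric case is identical. So your plan is right, but once you use the actual definition of $\phi$, the ``careful bookkeeping'' of bounded pieces and matched decompositions you flagged as the main obstacle simply disappears.
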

\begin{proof}
Let $A,B\subseteq X$ be unbounded sets such that $A\lambda B$ and let $B_{0}\subseteq B$ be an unbounded set. We wish to show that $B_{0}{\bf b}A$. If $B_{0}=B,$ then we are done. Otherwise, we have that $(B_{0}\cup(B\setminus B_{0}))\lambda A$ which implies that $A=(A_{1}\cup A_{2}),$ where $A_{1}$ and $A_{2}$ are nonempty and are such that $A_{1}\lambda B_{0}$ and $A_{2}\lambda (B\setminus B_{0})$. Because $B_{0}$ is unbounded we have that $A_{1}$ is also unbounded. Then $A_{1}\subseteq A$ is such that $A_{1}\lambda B_{0},$ and hence $A{\bf b}B_{0}$. Similarly, if $A_{0}\subseteq A$ is unbounded, then $A_{0}{\bf b}B$. Thus, $A\phi B$. 
\end{proof}

\begin{proposition}\label{needed corollary}
	Let $(X,\lambda)$ be an asymptotic resemblance space that induces the coarse proximity structure $(X,\mathcal{B},{\bf b})$ with corresponding weak asymptotic resemblance $\phi$. If $A_{1}$ and $B_{1}$ are unbounded sets such that $A_{1}{\bf b}B_{1},$ then there are unbounded sets $A_{2}\subseteq A_{1}$ and $B_{2}\subseteq B_{1}$ such that $A_{2}\phi B_{2}$.
\end{proposition}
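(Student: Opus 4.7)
The plan is to reduce the statement to Lemma \ref{stronk implies weak} by unpacking how the asymptotic resemblance relation $\lambda$ induces the coarse proximity ${\bf b}$. Recall that in the construction from \cite{paper2}, the induced coarse proximity is defined exactly by declaring $A{\bf b}B$ to hold if and only if there exist unbounded subsets $A'\subseteq A$ and $B'\subseteq B$ with $A'\lambda B'$. So the hypothesis $A_1{\bf b}B_1$ is essentially the conclusion we want, modulo promoting $\lambda$ to $\phi$ on the resulting subsets.

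First I would invoke this definition of the induced structure to extract unbounded $A_2\subseteq A_1$ and $B_2\subseteq B_1$ satisfying $A_2\lambda B_2$. Then I would apply the previous lemma directly to these two sets: since $A_2$ and $B_2$ are unbounded and $\lambda$-related, Lemma \ref{stronk implies weak} gives $A_2\phi B_2$, which is exactly what is needed.

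The entire argument is therefore essentially a two-line bookkeeping exercise. The only real "content" sits in Lemma \ref{stronk implies weak}, whose proof uses the fact that $\lambda$ interacts well with finite decompositions: if one of the two sets is split into two unbounded pieces, then the $\lambda$-related partner splits accordingly, producing enough unbounded witnesses to confirm ${\bf b}$-closeness of arbitrary unbounded subsets. The expected obstacle is thus not in the proof of the proposition itself but in making sure that the definition of the induced coarse proximity is invoked with the right orientation; once that is in place, the conclusion follows immediately.
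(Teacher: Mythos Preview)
Your proposal is correct and matches the paper's own proof, which simply states that the proposition is an immediate consequence of Lemma \ref{stronk implies weak}. You have correctly unpacked the one implicit step: the definition of the induced coarse proximity supplies the unbounded $\lambda$-related subsets, and the lemma upgrades $\lambda$ to $\phi$.
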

\begin{proof}
Immediate consequence of Lemma \ref{stronk implies weak}.
\end{proof}

\begin{example}
Let $(X,d)$ be an unbounded proper metric space with metric coarse proximity structure $(X,\mathcal{B}_{1},{\bf b}_{1})$. Then $\mathcal{U}X$ is homeomorphic to the Higson corona $\nu X$. Let $(Y,\mathcal{B}_{2},{\bf b}_{2})$ be the "box space" (i.e., $Y:= \nu X \times [0,1),$ see Section 6 of \cite{paper3}) corresponding to $\nu X$. Let $g:\nu X\rightarrow \nu X\times\{1\}$ be the homeomorphism defined by $x\mapsto (x,1)$. Consider the coarse proximity space $(X\sqcup Y,\mathcal{B}_{X\sqcup Y},{\bf b}_{g})$. Recall that for any $x\in\nu X,$ there is no unbounded set $A\subseteq X$ whose trace on the Higson corona $\nu X$ is precisely $\{x\}$. Let $x_{0}\in\nu X$ be given. Define $A_{1}\subseteq Y$ to be $\{x_{0}\}\times[0,1)$ and define $B_{1}=X$. Then $A_{1}{\bf b}_{g}B_{1}$, but if $A_{2}\subseteq A_{1}$ and $B_{2}\subseteq B_{1}$ are any two unbounded sets, then we have that $A_{2}\bar{\phi} B_{2}$ because $\phi$ related sets have the same trace, but  $A_{2}^{\prime}=\{x_{0}\}$ and $B_{2}^{\prime}$ cannot be precisely $\{x_0\}.$ Therefore, the coarse proximity space $(X\sqcup Y,\mathcal{B}_{X\sqcup Y},{\bf b}_{g})$ is not induced by any asymptotic resemblance by Proposition \ref{needed corollary}.
\end{example}

\bibliographystyle{abbrv}
\bibliography{counterexamples}{}

\end{document}